\newcommand{\p}{\partial}
\newtheorem{theorem}{Theorem}
\newtheorem{lemma}[theorem]{Lemma}
\newtheorem*{proposition*}{Proposition}
{\theoremstyle{definition}

\newtheorem*{notation*}{Notation}
}
\newcommand{\todo}[1][\null]{\ensuremath{\clubsuit}}
\newcommand{\noprint}[1]{}
\begin{document}

\par\noindent {\LARGE\bf
Variational symmetries and conservation laws\\ of the wave equation in one space dimension\par}

{\vspace{3mm}\par\noindent\large
Roman O. Popovych~$^\dag$ and Alexei F. Cheviakov~$^\ddag$
\par}

{\vspace{2mm}\par\noindent\it
$^{\dag}$Fakult\"at f\"ur Mathematik, Universit\"at Wien, Oskar-Morgenstern-Platz 1, 1090 Wien, Austria\\
$\phantom{^\dag}$Institute of Mathematics of NAS of Ukraine, 3 Tereshchenkivska Str., 01024 Kyiv, Ukraine
\par}

{\vspace{1mm}\noindent\it
$^\ddag$Department of Mathematics and Statistics, University  of Saskatchewan, Room 227 McLean Hall, 106 Wiggins Road, Saskatoon, SK, S7N 5E6 CANADA
\par}

\vspace{1mm}\par\noindent 
E-mail:  $^\dag$rop@imath.kiev.ua, $^\ddag$shevyakov@math.usask.ca

\vspace{4mm}\par\noindent\hspace*{5mm}\parbox{150mm}{\small
The direct method based on the definition of conserved currents of a system of differential equations
is applied to compute the space of conservation laws of the (1+1)-dimensional wave equation in the light-cone coordinates.
Then Noether's theorem yields the space of variational symmetries of the corresponding functional.
The results are also presented for the standard space-time form of the wave equation.}

\vspace{3mm}

\noindent 
{\footnotesize Keywords: wave equation, variational symmetry, conservation law, Noether's theorem, direct method
\par\vspace{3mm}}

\noprint{
MSC: 35L05, 35B06, 37K05
35-XX   Partial differential equations
 35Bxx  Qualitative properties of solutions
  35B06   Symmetries, invariants, etc.
 35Lxx		Hyperbolic equations and systems [See also 58J45]
  35L05  	Wave equation
70-XX   Mechanics of particles and systems {For relativistic mechanics, see 83A05 and 83C10; for statistical mechanics, see 82-XX}
 70Sxx  Classical field theories [See also 37Kxx, 37Lxx, 78-XX, 81Txx, 83-XX]
  70S10   Symmetries and conservation laws
37-XX   Dynamical systems and ergodic theory [See also 26A18, 28Dxx, 34Cxx, 34Dxx, 35Bxx, 46Lxx, 58Jxx, 70-XX]
 37Kxx  Infinite-dimensional Hamiltonian systems [See also 35Axx, 35Qxx]
  37K05   Hamiltonian structures, symmetries, variational principles, conservation laws

\noindent 
Keywords: wave equation, variational symmetry, conservation law, Noether's theorem, direct method
}

\section{Introduction}\label{sec:1}

The linear (1+1)-dimensional constant-coefficient wave equation in space-time variables
\begin{equation}\label{eq:w:tx}
\mathcal U\colon\quad u_{tt}-u_{xx}=0
\end{equation}
introduced by d'Alembert in 1747 \cite{d1747recherches} is a fundamental hyperbolic partial differential equation (PDE)
describing small oscillations in a wide variety of physical settings.
(In \eqref{eq:w:tx} and below, subscripts denote partial derivatives.)
The general solution of~\eqref{eq:w:tx} possesses a simple closed-form representation.
For many initial-boundary value problems for~\eqref{eq:w:tx},
the corresponding solutions can be constructed by standard methods
such as the method of characteristics, separation of variables, integral transforms, or Green's functions.
In the light-cone (characteristic) variables
\begin{equation}\label{eq:chvar}
\xi=x+t, \quad \eta=x-t,\quad u(t,x)=w(\xi,\eta),
\end{equation}
the PDE \eqref{eq:w:tx} assumes the second canonical form
\begin{equation}\label{eq:w:dal}
\mathcal W\colon\quad w_{\xi\eta}=0.
\end{equation}
The wave equation arises from a variational principle, with the Lagrangian densities respectively given by
$\mathrm L_{\mathcal U}=-\frac12(u_t^2-u_x^2)$ and $\mathrm L_{\mathcal W}=-\frac12w_\xi w_\eta$.
Generalized symmetries (or, equivalently, cosymmetries) of the wave equation were completely described in \cite[Section~18.4]{Ibragimov1985}
using the representation~\eqref{eq:w:dal} of this equation in the light-cone variables.
At the same time, there are no complete results for variational symmetries%
\footnote{%
Given a functional with a Lagrangian~$\mathrm L$ being a differential function of~$u$, 
it is convenient for us to call variational symmetries \cite[Definition~5.51]{Olver1993} of this functional 
as variational symmetries of the corresponding system of Euler--Lagrange equations $\mathsf E_uL=0$, 
where $\mathsf E_u$ denotes the Euler operator with respect to~$u$ \cite[Definition~4.3]{Olver1993}.
}
and local conservation laws of the wave equation
in the literature although a number of its conservation laws including physically important ones are commonly known.
The goal of the present paper is to fill this gap in the literature.

Although both the light-cone form \eqref{eq:w:dal} and the space-time form \eqref{eq:w:tx} of the wave equation 
are normal systems of differential equations in the sense of \cite[Definition 2.78]{Olver1993}, 
yet an important difference between them is 
that \eqref{eq:w:tx} can straightforward be represented in the \emph{(extended) Kovalevskaya form} \cite{MAlonso1979,Tsujishita1982} 
(see also footnote in~\cite{Popovych&Bihlo2020})
with respect to each of the independent variables $t$ and~$x$, 
which is not the case for the PDE \eqref{eq:w:dal} as it stands.
As shown in~\cite{MAlonso1979}, if a normal system of differential equations is written in the extended Kovalevskaya form with respect to an independent variable, 
then every equivalence class of conservation-law characteristics contains a characteristic that does not depend on the corresponding principal derivatives.%
\footnote{%
See also \cite{Anco&Bluman2002b,Bocharov&Co1999,Bluman&Cheviakov&Anco2010,Ibragimov1985,Olver1993,Popovych&Bihlo2020,Tsujishita1982}
for basic theoretical results on conservation laws of systems of differential equations, 
methods of their computations and further references. 
} 
There are no similar results for non-Kovalevskaya rankings of derivatives. 
Hence the quotient space of conservation-law characteristics of \eqref{eq:w:tx}
is naturally isomorphic to the space of differential functions $\mu=\mu[u]$ 
that satisfy the condition 
\begin{equation}\label{eq:det:CL:xt}
\mathsf E_u(\mu(u_{tt}-u_{xx}))\equiv 0
\end{equation}
and are assumed, e.g., to be independent of all derivatives $u_{ij}$, $i\geqslant2$, $j\geqslant0$. 
Here and in what follows $u_{ij}:=\p^{i+j} u/\p^it\p^jx$ and $w_{ij}:=\p^{i+j} w/\p^i\xi\p^j\eta$, 
$i,j\in\mathbb N_0:=\mathbb N\cup\{0\}$.
The determining equations obtained by splitting the condition~\eqref{eq:det:CL:xt} prove to be, however, rather hard to deal with. 
The same happens with the computation of conservation-law characteristics for the light-cone form~\eqref{eq:w:dal} of the wave equation. 
Here things are further complicated by the fact that \eqref{eq:w:dal} does not have an extended Kovalevskaya form, 
and hence in the course of excluding the principal derivative $w_{kl}$, $k,l\geqslant1$, for avoiding singular multipliers,  
one might lose conservation laws.

\looseness=-1
This is why in Section \ref{sec:2} we use the brute-force approach, 
working directly with conserved currents
to compute all local conservation laws of the wave equation~\eqref{eq:w:dal} 
in the light-cone variables. 
Then using the characteristic form of conservation laws
we find the conservation-law characteristics (=~variational symmetries) of~\eqref{eq:w:dal}, 
which shows that they constitute a proper subspace of the space of all cosymmetries (=~generalized symmetries) of~\eqref{eq:w:dal}. 
The bijection between equivalence classes of conservation-law characteristics and those of conserved currents \cite[Theorem~4.26]{Olver1993}  
leads to the description of the space of trivial conserved currents of~\eqref{eq:w:dal} 
and, thus, to the complete characterization of the space of local conservation laws of~\eqref{eq:w:dal} 
as a quotient space.  

In Section \ref{sec:3}, local conservation laws of the wave equation~\eqref{eq:w:tx} in space-time variables 
are explicitly derived using the results of Section \ref{sec:2} and the transformation~\eqref{eq:chvar}. 
In particular, it is shown that the conservation-law characteristics of~\eqref{eq:w:tx} have a rather complex form, 
which is unlikely to be obtained in the closed form using the condition~\eqref{eq:det:CL:xt}. 
Some examples are considered.

The paper is concluded with a discussion. 
An important question we are concerned with, for the current example and in general, 
is the equivalence of conservation-law characteristics on the solution space of the given system of differential equations, 
and the possibility to exclude higher-rank derivatives the system is solved for 
from the characteristic dependence by substitutions on the solution space. 
This question, in view of the possible loss of conservation laws after such exclusions, 
is discussed for different forms of the wave equation. 

\section{Computation in light-cone coordinates}\label{sec:2}

The computation of generalized symmetries, conservation laws and variational symmetries of the (1+1)-dimensional wave equation
is convenient to carry out in the light-cone variables $(\xi,\eta)$.

For the computational purposes, it is opportune to use the following particular notation.
The notation $F=F[w|\eta]$ and $G=G[w|\xi]$ for differential functions~$F$ and~$G$ respectively mean
that $F$ is a smooth function of~$\eta$ and a finite number of $w_{0l}$, $l\in\mathbb N$,
and $G$ is a smooth function of~$\xi$ and a finite number of $w_{k0}$, $k\in\mathbb N$,
\[
F=F(\eta,w_{01},\dots,w_{0r_1}), \quad
G=G(\xi,w_{10},\dots,w_{r_20}), \quad r_1,r_2\in\mathbb N_0.
\]
$\mathscr D_\xi:=\p_{\xi}+w_{k+1,0}\p_{w_{k0}}$ and $\mathscr D_\eta:=\p_{\eta}+w_{0,l+1}\p_{w_{0l}}$
are the evaluations of the total derivative operators~$\mathrm D_\xi$ and~$\mathrm D_\eta$ 
with respect to~$\xi$ and~$\eta$ on solutions of~$\mathcal W$, respectively.
Here and in what follows
the indices $k$ and~$l$ run through~$\mathbb N$
, and we assume summation for repeated indices.

Due to the existence of a Lagrangian formulation,
the linearization operator for the equation~$\mathcal W$ is self-adjoint,
and hence the spaces of symmetries and cosymmetries of~$\mathcal W$ coincide.
As was mentioned in the introduction, the algebra of generalized symmetries of~$\mathcal W$ is well known \cite[Section~18.4]{Ibragimov1985}.
Any generalized symmetry of~$\mathcal W$ is equivalent to a generalized symmetry in the evolution form $\zeta\p_w$
with $\zeta=Cw+F[w|\eta]+G[w|\xi]$,
where $C$ is an arbitrary constant, and $F$ and~$G$ are arbitrary functions of the above kind. 
Such structure of generalized symmetries is not common for linear systems of differential equations~\cite{Shapovalov&Shirokov1992}.

\looseness=-1
Not all generalized symmetries of~$\mathcal W$ are variational symmetries of the associated functional with Lagrangian~$\mathrm L_{\mathcal W}$ (in fact, relatively few are).
We now describe the entire algebra of variational symmetries (or, equivalently, the space of conservation-law characteristics) of the equation~$\mathcal W$.
It is not easy to single out this algebra (or this space) from the algebra of generalized symmetries of~$\mathcal W$
(or from the space of cosymmetries of~$\mathcal W$).
This is why we first compute a space of conserved currents representing conservation laws of~$\mathcal W$
and then obtain the associated space of conservation-law characteristics of~$\mathcal W$,
which is converse to the common use of Noether's theorem.

\begin{lemma}\label{lem:wDal:GenFormOfCCs}
Any conserved current of the equation~\eqref{eq:w:dal} is equivalent to
a tuple $(F[w|\eta],G[w|\xi])$.
\end{lemma}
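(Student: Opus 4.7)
The plan is to restrict to the solution manifold and perform a step-by-step reduction of the current via trivial modifications, inducting on the jet order.

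\textbf{Setup.} I would restrict $(P,Q)$ to the solution manifold of $\mathcal W$, on which $w_{kl}=0$ for all $k,l\ge 1$: then $P$ and $Q$ become smooth functions of the independent coordinates $\xi,\eta,w,w_{k0}$ ($k\ge 1$), $w_{0l}$ ($l\ge 1$); the conservation law reads $\mathscr D_\xi P+\mathscr D_\eta Q=0$, and equivalence is realised by the map $(P,Q)\mapsto(P+\mathscr D_\eta T,\,Q-\mathscr D_\xi T)$. Two facts will be used repeatedly: the operators $\mathscr D_\xi$ and $\mathscr D_\eta$ commute; and $\ker\mathscr D_\xi$ consists of the differential functions depending only on $\eta$ and the $w_{0l}$'s, while $\ker\mathscr D_\eta$ consists of those depending only on $\xi$ and the $w_{k0}$'s. (Each kernel description is proved by a short induction tracking the highest jet order: if $\mathscr D_\xi h=0$ and $h$ depends on $w_{K0}$ with $K\ge 1$ maximal, the coefficient of $w_{K+1,0}$ in $\mathscr D_\xi h$ forces $h_{w_{K0}}=0$, a contradiction; the residual $\xi$- and $w$-dependencies are then eliminated similarly.)

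\textbf{Main inductive step.} Let $K=\max(o_\xi(P),o_\xi(Q))$, where $o_\xi$ denotes the highest $k$ with $w_{k0}$-dependence. If $K\ge 1$, comparing coefficients of $w_{K+1,0}$ on both sides of the conservation law — and using that $\mathscr D_\eta$ treats the $w_{k0}$'s as parameters, so produces no $w_{K+1,0}$ — forces $P_{w_{K0}}=0$, whence $o_\xi(P)<K=o_\xi(Q)$. Expand $Q=\sum_j w_{K0}^j Q_j$ with each $Q_j$ independent of $w_{K0}$; splitting the conservation law by powers of $w_{K0}$, coefficients at $j\ge 2$ satisfy $\mathscr D_\eta Q_j=0$, so by the kernel description $Q_j=Q_j(\xi,w_{10},\dots,w_{K-1,0})$, and the corresponding term $w_{K0}^jQ_j$ already has the form of a $G[w|\xi]$-piece and is moved into an accumulating $G$. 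The remaining degree-one term is eliminated by the trivial modification $T=\int Q_1\,dw_{K-1,0}$, an algebraic antiderivative in the single jet variable $w_{K-1,0}$: a direct computation shows that $\mathscr D_\xi T$ contains the needed $w_{K0}Q_1$ along with terms of $w_{k0}$-order at most $K-1$, while $\mathscr D_\eta T$ has $w_{k0}$-order at most $K-1$ and only modifies $P$ at that lower order. Iterating brings $K$ down to $0$.

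\textbf{Symmetric reduction and base case.} An analogous induction on $L=\max(o_\eta(P),o_\eta(Q))$ reduces $L$ to $0$, accumulating the corresponding pieces into $F[w|\eta]$. After both reductions $P$ and $Q$ depend only on $(\xi,\eta,w)$, and comparing $w_{10}$- and $w_{01}$-coefficients of the conservation law forces $P_w=Q_w=0$; so $P=P(\xi,\eta)$ and $Q=Q(\xi,\eta)$ with $P_\xi+Q_\eta=0$, and the classical two-dimensional Poincar\'e lemma (take $T(\xi,\eta)=\int_0^\eta P(\xi,\eta')\,d\eta'$) reduces the pair to $(0,Q(\xi,0))$, which joins the accumulated $G$.

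\textbf{Main obstacle.} The delicate point is that the variable $w$ couples the two directions via $\mathscr D_\xi w=w_{10}$ and $\mathscr D_\eta w=w_{01}$, so a modification aimed at lowering $w_{k0}$-order may reintroduce $w_{0l}$-dependence (and vice versa), threatening the alternation between the two inductions. The rescue comes from the conservation-law constraint itself: at each stage the cross-terms introduced are, thanks to $\mathscr D_\xi P+\mathscr D_\eta Q=0$ together with the already-proved kernel descriptions, forced to be of admissible $F[w|\eta]$- or $G[w|\xi]$-form, so they are absorbed into $F$ and $G$ rather than inflating the residual order. Bookkeeping this absorption carefully, exploiting $[\mathscr D_\xi,\mathscr D_\eta]=0$, is the main technical content of the argument.
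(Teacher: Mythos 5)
Your strategy (restrict to the solution manifold, characterize $\ker\mathscr D_\xi$ and $\ker\mathscr D_\eta$, and lower the jet order of the current by trivial modifications) is reasonable, and the individual order\-/reduction steps are nearly correct modulo one imprecision: for a smooth $Q$ the expansion $Q=\sum_j w_{K0}^{\,j}Q_j$ is not available. What the conservation law actually yields is $\mathscr D_\eta\,\p_{w_{K0}}^{\,2}Q=0$, so $\p_{w_{K0}}^{\,2}Q$ lies in $\ker\mathscr D_\eta$ and the part of $Q$ beyond its first-order Taylor polynomial in $w_{K0}$ (a double integral from a base point) is of $G[w|\xi]$-form; that is fixable. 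The genuine gap is the one you flag in your final paragraph and then dispose of by assertion: termination of the alternating reduction. The corrections couple the two directions through $w$: a $\xi$-step adds $\mathscr D_\eta T$ to $P$ and can raise the $\eta$-order by one, while an $\eta$-step adds $w_{10}T'_w$ to $Q$ whenever $T'$ depends on $w$ --- and this is unavoidable at the stage $L=1$, where $T'=\int P_1\,\mathrm{d}w_{0,L-1}=\int P_1\,\mathrm{d}w$ is an antiderivative in $w$ itself. So reaching $L=1$ throws you back to $K=1$, and the ensuing $\xi$-step (an antiderivative in $w$ again) can raise $L$ once more; nothing in your bookkeeping makes the combined order monotone. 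The claim that ``the cross-terms are forced to be of admissible form'' is exactly what must be proved, and establishing it requires extra structural consequences of the conservation law (for instance, at the $L=1$, $K=0$ endgame one needs $\p_\xi\p_\eta\p_{w_{01}}P=0$ to see that the reintroduced term $w_{10}a(\xi)$ in $Q$ is itself already a $G[w|\xi]$-piece). As written, the hardest part of the proof is deferred rather than carried out.

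For comparison, the paper avoids the alternation entirely by normalizing only one component. With $q$ the top $\xi$-order of $F$ alone (including $q=0$ for $w$ itself), differentiating $\mathscr D_\xi F+\mathscr D_\eta G=0$ with respect to $w_{q+1,0}$ gives $\p_{w_{q0}}F=-\mathscr D_\eta\p_{w_{q+1,0}}G$, and subtracting the trivial current $(\mathscr D_\eta H,-\mathscr D_\xi H)$ with $H=\int\p_{w_{q+1,0}}G\,\mathrm{d}w_{q0}$ strictly lowers the $\xi$-order of $F$; the correction is built from $G$, whose shape is never tracked, so the descent is monotone. A further step of the same kind removes the residual $\xi$-dependence of $F$, after which the equation collapses to $\mathscr D_\eta G=0$ and pins down $G=G[w|\xi]$ in one stroke. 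To complete your argument you would need either to prove the absorption claim (a sub-induction of its own) or to switch to such a one-sided, monotone reduction.
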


\begin{proof}
Let $(F,G)$ be a conserved current of the equation~$\mathcal W$.
Without loss of generality, up to the conserved-current equivalence related to vanishing on the solution set of~$\mathcal W$,
all the mixed derivatives can be excluded from $(F,G)$.
In other words, we can assume that the tuple $(F,G)$ depends at most on $\xi$, $\eta$, $w$
and a finite number of non-mixed derivatives $w_{k0}$ and $w_{0l}$ and $k,l\in\mathbb N$.
The condition $(\mathrm D_\xi F+\mathrm D_\eta G)\big|_{\mathcal W}=0$ for conserved currents of~$\mathcal W$
can then be rewritten in the form $\mathscr D_\xi F+\mathscr D_\eta G=0$.
We fix an arbitrary point $\mathbf j^0=(\xi^0,\eta^0,w_{kl}^0,k,l\in\mathbb N_0)$ in the domain of $(F,G)$.
(Only a finite number of components of~$\mathbf j^0$ is relevant for the proof.)

First we prove that up to the general conserved-current equivalence,
the $\xi$-component~$F$ does not depend on $\xi$ and $\xi$-derivatives of $w$, including $w$ as the zeroth-order derivative.
Assume that it does, and the highest order of such derivatives is nonnegative, 
\mbox{$q:=\max\{k\in\mathbb N_0\mid F_{w_{k0}}\ne 0\}\geqslant 0$}.
We differentiate the equation $\mathscr D_\xi F+\mathscr D_\eta G=0$ with respect to $w_{q+1,0}$
and use the fact that $\p_{w_{q+1,0}}$ and $\mathscr D_\eta$ commute, which gives $\p_{w_{q0}}F+\mathscr D_\eta\p_{w_{q+1,0}} G=0$.
Then the integration of the obtained equality with respect to the jet variable~$w_{q0}$ in a neighborhood of~$\mathbf j^0$
results in the following representation for~$F$:
\[
F=\widetilde F-\mathscr D_\eta H,
\quad\mbox{where}\quad
\widetilde F:=F\big|_{w_{q0}^{}=w_{q0}^0}, \quad
H:=\int_{w_{q0}^0}^{w_{q0}}(\p_{w_{q+1,0}}G)\big|_{w_{q0}=\tau}\,{\rm d}\tau.
\]
Let $\tilde q$ be the highest order of $\xi$-derivatives of $w$ involved in~$\widetilde F$,
$\tilde q:=\max\{k\in\mathbb N_0\mid\widetilde F_{w_{k0}}\ne 0\}$ if this set is nonempty and $\tilde q:=-\infty$ otherwise.
The differential function~$\widetilde F$ does not depend on~$w_{q0}$, so we have $\tilde q<q$.
The tuple $(\widetilde F,\widetilde G)$ with $\widetilde G:=G-\mathscr D_\xi H$
is a conserved current of~$\mathcal W$ that is equivalent to $(F,G)$
since it is obtained from $(F,G)$ by adding the trivial conserved current $(\mathscr D_\eta H,-\mathscr D_\xi H)$.
Substituting zeros into $\widetilde G$ for the involved mixed derivatives,
we construct the conserved current $(\widetilde F,\widehat G)$,
which is equivalent to $(\widetilde F,\widetilde G)$
in view of the vanishing difference of these conserved currents on the solution set of~$\mathcal W$
and thus equivalent to $(F,G)$.
We replace $(F,G)$ by $(\widetilde F,\widehat G)$, omitting the accent signs.
The repetition of the above procedure leads to an equivalent conserved current,
still denoted by $(F,G)$,
with $\xi$-component~$F$ involving no $\xi$-derivatives $w_{k0}$, $k\in\mathbb N_0$.
For this conserved current, the condition $(\mathrm D_\xi F+\mathrm D_\eta G)\big|_{\mathcal W}=0$
degenerates to $F_{\xi}+\mathscr D_\eta G=0$.
Now we employ a modification of the above procedure for~$\xi$.
The integration of the latter equality with respect to~$\xi$ in a neighborhood of~$\mathbf j^0$ gives
\[
F=\widetilde F-\mathscr D_\eta H,
\quad\mbox{where}\quad
\widetilde F:=F\big|_{\xi=\xi^0}, \quad
H:=\int_{\xi^0}^{\xi}G\big|_{\xi=\tau}\,{\rm d}\tau.
\]
The differential function~$\widetilde F$ does not depend on~$\xi$.
The tuple $(\widetilde F,\widetilde G)$ with $\widetilde G:=G-\mathscr D_\xi H$
is a conserved current of~$\mathcal W$ that is equivalent to $(F,G)$
since it is obtained from $(F,G)$ by adding the trivial conserved current $(\mathscr D_\eta H,-\mathscr D_\xi H)$.
Substituting zeros into $\widetilde G$ for the involved mixed derivatives,
we construct the conserved current $(\widetilde F,\widehat G)$,
which is equivalent to $(\widetilde F,\widetilde G)$
in view of vanishing the difference of these conserved currents on the solution set of~$\mathcal W$
and thus equivalent to $(F,G)$.
We replace $(F,G)$ by $(\widetilde F,\widehat G)$, again omitting the accent signs.

Since the $\xi$-component of the new conserved current~$(F,G)$
depends only on $\eta$ and a finite number of~$w_{0l}$, $l\in\mathbb N$,
i.e., $F=F[w|\eta]$,
then the $\eta$-component~$G$ satisfies the equation $\mathscr D_\eta G=0$,
which recursively splits into the system $G_\eta=0$, $G_{w_{0l}}=0$, $l\in\mathbb N_0$.

Varying the point $\mathbf j^0$ within the domain of $(F,G)$ completes lemma's proof.
\end{proof}

\begin{lemma}\label{lem:wDal:chars}
The quotient space of variational symmetries (or, equivalently, of conservation-law characteristics)
of the equation~\eqref{eq:w:dal} is naturally isomorphic to the subspace of its variational symmetries of the form
\begin{gather}\label{eq:w:dal:CL:char}
\lambda=(-\mathscr D_\eta)^{l-1}\p_{w_{0l}}F[w|\eta] + (-\mathscr D_\xi)^{k-1}\p_{w_{k0}}G[w|\xi].
\end{gather}
\end{lemma}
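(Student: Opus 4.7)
The plan is to combine Lemma~\ref{lem:wDal:GenFormOfCCs} with the standard bijection between equivalence classes of conserved currents and of conservation-law characteristics \cite[Theorem~4.26]{Olver1993}, and to produce the formula~\eqref{eq:w:dal:CL:char} by an off-shell integration by parts. First I would take an arbitrary conservation-law characteristic of~$\mathcal W$; by the cited theorem it corresponds to an equivalence class of conserved currents which, by Lemma~\ref{lem:wDal:GenFormOfCCs}, contains a representative of the form $(F[w|\eta],G[w|\xi])$. Since $F$ depends only on~$\eta$ and on the $w_{0l}$, $l\in\mathbb N$, one has $\mathrm D_\xi F=F_{w_{0l}}w_{1l}=F_{w_{0l}}\mathrm D_\eta^{l-1}w_{11}$, and symmetrically $\mathrm D_\eta G=G_{w_{k0}}\mathrm D_\xi^{k-1}w_{11}$. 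A telescoping integration by parts in~$\eta$ and in~$\xi$ then yields the off-shell identity
\[
\mathrm D_\xi F+\mathrm D_\eta G=\lambda\,w_{11}+\mathrm D_\eta R+\mathrm D_\xi S,
\]
in which $\lambda$ is exactly the expression on the right-hand side of~\eqref{eq:w:dal:CL:char}; here I would use that $\mathrm D_\eta$ coincides with~$\mathscr D_\eta$ when applied to a differential function of the form $[w|\eta]$ (and likewise for~$\xi$), which is what converts the total-derivative operators into the evaluated ones appearing in~\eqref{eq:w:dal:CL:char}. Because $(\mathrm D_\eta R,\mathrm D_\xi S)$ is a trivial conserved current, $\lambda$ is the characteristic associated with the class of $(F,G)$, proving surjectivity of the inclusion of the subspace~\eqref{eq:w:dal:CL:char} into the quotient of characteristics.

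For injectivity I would argue that any $\lambda$ of the form~\eqref{eq:w:dal:CL:char} equivalent to zero must vanish as a differential function. Since $\mathcal W$ is a normal system, such a $\lambda$ vanishes on every solution of~$\mathcal W$. The expression~\eqref{eq:w:dal:CL:char} involves no mixed derivatives and splits as $\lambda_1(\eta,w_{01},w_{02},\dots)+\lambda_2(\xi,w_{10},w_{20},\dots)$; on a general solution $w=A(\xi)+B(\eta)$ the arguments of~$\lambda_1$ and~$\lambda_2$ become $A^{(k)}(\xi)$ and $B^{(l)}(\eta)$, which can be prescribed independently and arbitrarily by the choice of~$A$ and~$B$. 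Consequently both~$\lambda_1$ and~$\lambda_2$ must be identically zero.

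The main technical obstacle I foresee is the off-shell integration-by-parts step: one has to verify that the telescoping is clean, i.e., that it produces precisely the right-hand side of~\eqref{eq:w:dal:CL:char} without residual mixed-derivative terms, and that the divergence remainder $(\mathrm D_\eta R,\mathrm D_\xi S)$ is indeed trivial in the sense of the equivalence used in Lemma~\ref{lem:wDal:GenFormOfCCs}. Once this bookkeeping is done, both surjectivity and injectivity fall out from the independence of the non-mixed jet coordinates on the solution space of~$\mathcal W$.
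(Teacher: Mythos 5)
Your proposal is correct and follows essentially the same route as the paper: take the representative $(F[w|\eta],G[w|\xi])$ supplied by Lemma~\ref{lem:wDal:GenFormOfCCs}, integrate by parts off shell to put the divergence in characteristic form $\lambda w_{11}$ plus a remainder whose components vanish on solutions (hence form a trivial conserved current), and then observe that a characteristic of the form~\eqref{eq:w:dal:CL:char} vanishes on the solution set only if it vanishes identically. One small inaccuracy in your injectivity step: from $\lambda_1(\eta,B',B'',\dots)+\lambda_2(\xi,A',A'',\dots)=0$ for all choices of $A$ and $B$ you may conclude only that $\lambda_1$ and $\lambda_2$ are \emph{opposite constants}, not that each vanishes identically (e.g.\ $F=w_{01}$, $G=-w_{10}$ give $\lambda_1=1$, $\lambda_2=-1$); this still yields $\lambda\equiv0$, so the lemma is unaffected, but the distinction is exactly what produces the constant $C$ in Lemma~\ref{lem:wDalTrivCCs}.
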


\begin{proof}
We fix an arbitrary conservation law of the equation~$\mathcal W$.
In view of Lemma~\ref{lem:wDal:GenFormOfCCs}, it contains a conserved current of the form $(F[w|\eta],G[w|\xi])$.
We expand the total divergence of $(F,G)$ and iteratively integrate by parts in
(or, equivalently, apply the Lagrange identity to) each obtained summand:
\begin{eqnarray*}
\mathrm D_\xi F + \mathrm D_\eta G &=& w_{1l} \p_{w_{0l}}F + w_{k1} \p_{w_{k0}}G  \\
&=& \left((-\mathscr D_\eta)^{l-1}\p_{w_{0l}}F + (-\mathscr D_\xi)^{k-1}\p_{w_{k0}}G\right) w_{11} + \mathrm D_\xi F^0+\mathrm D_\eta G^0,
\end{eqnarray*}
where $(F^0,G^0)$ is a trivial conserved current, which vanishes on solutions of~$\mathcal W$.
We derive the characteristic representation of the fixed conservation law
in terms of the conserved current $(F-F^0,G-G^0)$, which is equivalent to $(F,G)$,
\[
\mathrm D_\xi(F-F^0)+\mathrm D_\eta (G-G^0)=\lambda w_{11}
\]
with $\lambda$ given by \eqref{eq:w:dal:CL:char}.
Therefore, $\lambda$ is a characteristic of the fixed conservation law
as well as a variational symmetry of~$\mathcal W$.

It is easy to check that any tuple $(F[w|\eta],G[w|\xi])$ is a conserved current of~$\mathcal W$.
Hence any differential function of the form~\eqref{eq:w:dal:CL:char}
is a conservation-law characteristic of~$\mathcal W$.
Such a differential function vanishes on solutions of~$\mathcal W$
if and only if it identically vanishes.
\end{proof}

In particular, any function $\lambda=\lambda(\xi,\eta)$ satisfying the equation~\eqref{eq:w:dal},
$\lambda_{\xi\eta}=0$, i.e., $\lambda=F(\eta)+G(\xi)$,
is a $w$-independent (order $-\infty$) variational symmetry of this equation.

Let $V$ denote the subspace of conserved currents $(F,G)$ of equation~\eqref{eq:w:dal},
where $F$ is an arbitrary smooth function of~$\eta$ and a finite but unspecified number of $w_{0l}$, $l\in\mathbb N$,
and $G$ is an arbitrary smooth function of~$\xi$ and a finite but unspecified number of $w_{k0}$, $k\in\mathbb N$,
$V:=\{(F[w|\eta],G[w|\xi])\}$.
As mentioned in the proof of Lemma~\ref{lem:wDal:chars},
all elements of~$V$ are indeed conserved currents of~\eqref{eq:w:dal}.
Let $V_0$ denote the subspace of trivial conserved currents belonging to~$V$.

\begin{lemma}\label{lem:wDalTrivCCs}
The subspace~$V_0$ consists of the tuples that can be locally represented in the form $(F,G)$,
where $F=\mathscr D_\eta\tilde F+Cw_{01}$ and $G=\mathscr D_\xi\tilde G-Cw_{10}$
for some $\tilde F=\tilde F[w|\eta]$, some $G=\tilde G[w|\xi]$ and some constant~$C$.
\end{lemma}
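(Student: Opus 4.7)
The plan is to use Lemma~\ref{lem:wDal:chars} to translate the triviality of a current $(F,G)\in V$ into the vanishing of its characteristic~$\lambda$ from~\eqref{eq:w:dal:CL:char}, and then to solve the equation $\lambda\equiv0$ for the structure of~$(F,G)$. Denote the two halves of~$\lambda$ by
\[
\lambda_1:=(-\mathscr D_\eta)^{l-1}\p_{w_{0l}}F \quad\text{and}\quad \lambda_2:=(-\mathscr D_\xi)^{k-1}\p_{w_{k0}}G,
\]
which depend on disjoint blocks of jet variables, $\lambda_1$ on~$\eta$ and the $w_{0l}$'s, $\lambda_2$ on~$\xi$ and the $w_{k0}$'s. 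Hence $\lambda_1+\lambda_2\equiv0$ forces each half to be a constant with $\lambda_1\equiv C$, $\lambda_2\equiv -C$ for some $C\in\mathbb R$. Replacing $(F,G)$ by $(F-Cw_{01},\,G+Cw_{10})$ reduces the task to the case $C=0$.

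The core step is then to show that if $F'=F'[w|\eta]$ satisfies $\sum_{l\geqslant1}(-\mathscr D_\eta)^{l-1}\p_{w_{0l}}F'=0$, then $F'=\mathscr D_\eta\tilde F$ locally for some $\tilde F=\tilde F[w|\eta]$. Applying $-\mathscr D_\eta$ to the hypothesis and using $\p_wF'=0$ rewrites it as the vanishing of the one-variable Euler--Lagrange expression of~$F'$, viewed as a Lagrangian on the single-variable jet space of $w(\eta)$. The classical null-Lagrangian theorem then supplies a local primitive $K=K(\eta,w,w_{01},w_{02},\dots)$ with $F'=\mathrm D_\eta K$, where $\mathrm D_\eta=\p_\eta+w_{01}\p_w+w_{02}\p_{w_{01}}+\cdots$ is the full 1D total derivative. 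Since $\p_w$ commutes with $\mathrm D_\eta$, one has $\mathrm D_\eta(\p_wK)=\p_wF'=0$, and because the kernel of~$\mathrm D_\eta$ in the single-variable jet space consists only of constants, $\p_wK=c$ is a constant. Writing $K=cw+\tilde F$ with $\p_w\tilde F=0$ yields $F'=\mathscr D_\eta\tilde F+cw_{01}$, and substituting this back into the hypothesis (together with the auxiliary identity $\sum_{l\geqslant1}(-\mathscr D_\eta)^{l-1}\p_{w_{0l}}(\mathscr D_\eta\tilde F)=0$, which follows by an index shift and cancellation) forces $c=0$. The mirror argument supplies $G'=\mathscr D_\xi\tilde G$ with $\tilde G=\tilde G[w|\xi]$.

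Combining gives the claimed representation $F=\mathscr D_\eta\tilde F+Cw_{01}$, $G=\mathscr D_\xi\tilde G-Cw_{10}$. The converse is a direct verification: the cancellation identity just mentioned and $\sum_{l\geqslant1}(-\mathscr D_\eta)^{l-1}\p_{w_{0l}}(Cw_{01})=C$ give $\lambda=C+(-C)=0$ for any $(F,G)$ of the stated form, whence Lemma~\ref{lem:wDal:chars} produces triviality. The main delicate point is in the core step: guaranteeing that~$\tilde F$ can be chosen of the restricted type $\tilde F[w|\eta]$. The $w$-independence is handled by the constant-kernel argument above; the absence of dependence on~$\xi$ and the $w_{k0}$'s is inherited from~$F'$ since the local homotopy operator producing~$K$ involves only~$F'$ and $\mathscr D_\eta$, treating the complementary jet variables as inert parameters throughout.
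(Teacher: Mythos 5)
Your argument is correct and follows essentially the same route as the paper: reduce triviality to $\lambda\equiv0$ via Lemma~\ref{lem:wDal:chars}, split $\lambda$ into two halves depending on disjoint blocks of jet variables so that each half is a constant, and invoke the null-Lagrangian (total divergence) characterization to solve the resulting equations for $F$ and~$G$. The only cosmetic difference is that the paper reads $(-\mathscr D_\eta)^{l-1}\p_{w_{0l}}$ directly as the Euler operator in the dependent variable~$w_{01}$ (with $\eta$ the sole independent variable), which yields $F=\mathscr D_\eta\tilde F+Cw_{01}$ in one stroke, whereas you pass to the Euler operator in~$w$ and then need the extra kernel argument to remove the $w$-dependence of the primitive~$K$.
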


\begin{proof}
It is obvious that $\mathcal W$ is normal, totally nondegenerate system of differential equations.
Then in view of \cite[Theorem 4.26]{Olver1993} and Lemma~\ref{lem:wDal:chars},
a conserved current $(F[w|\eta],G[w|\xi])$ of~$\mathcal W$ is trivial
if and only if the differential function defined by~\eqref{eq:w:dal:CL:char} identically vanishes.
Since the functions~$F$ and~$G$ depend on different arguments,
the latter condition is equivalent to that the expressions
$R^1:=(-\mathscr D_\eta)^{l-1}\p_{w_{0l}}F$ and $R^2:=(-\mathscr D_\xi)^{k-1}\p_{w_{k0}}G$
are constants with $R^2=-R^1$.
Here the operator $(-\mathscr D_\eta)^{l-1}\p_{w_{0l}}$ (resp.\ $(-\mathscr D_\xi)^{k-1}\p_{w_{k0}}$)
can be interpreted as the Euler operator in the dependent variable~$w_{01}$ (resp.\ $w_{10}$),
where the only $\eta$ (resp.\ $\xi$) is assumed to be the independent variable,
and $\xi$ (resp.\ $\eta$) plays the role of a parameter.
In view of \cite[Theorem 4.7]{Olver1993}, the defining equalities for~$R^1$ and~$R^2$ 
considered as linear inhomogeneous equations with respect to~$F$ and~$G$
are equivalent to the (local) representations for~$F$ and~$G$ from lemma's statement.
\end{proof}

As a consequence of Lemmas~\ref{lem:wDal:GenFormOfCCs} and~\ref{lem:wDalTrivCCs},
we derive the following theorem.

\begin{theorem}\label{thm:wDal:chars}
The space of conservation laws of the equation~\eqref{eq:w:dal}
is naturally isomorphic to the quotient of the space~$V$ by the subspace~$V_0$.
\end{theorem}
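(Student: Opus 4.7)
The plan is to realize the isomorphism $V/V_0 \cong \mathrm{CL}(\mathcal W)$ as a direct application of the first isomorphism theorem, where $\mathrm{CL}(\mathcal W)$ denotes the space of conservation laws (i.e., the quotient of all conserved currents of~$\mathcal W$ by the subspace of trivial ones). All the work has essentially been done in Lemmas~\ref{lem:wDal:GenFormOfCCs} and~\ref{lem:wDalTrivCCs}; what remains is a short piece of bookkeeping.

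First I would introduce the natural linear map $\pi\colon V\to\mathrm{CL}(\mathcal W)$ that sends each tuple $(F[w|\eta],G[w|\xi])\in V$ to its equivalence class modulo trivial conserved currents of~$\mathcal W$. The map is well defined because, as noted in the paragraph preceding Lemma~\ref{lem:wDalTrivCCs}, every element of~$V$ is in fact a conserved current of~$\mathcal W$. Then I would verify that $\pi$ is surjective: by Lemma~\ref{lem:wDal:GenFormOfCCs}, every conserved current of~$\mathcal W$ is equivalent to one of the form $(F[w|\eta],G[w|\xi])$, so every conservation-law class contains a representative in~$V$.

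Next I would identify $\ker\pi$. By definition, $(F,G)\in V$ satisfies $\pi(F,G)=0$ precisely when $(F,G)$ is a trivial conserved current of~$\mathcal W$, which is exactly the condition defining membership in~$V_0$. Hence $\ker\pi=V_0$, and the first isomorphism theorem yields the claimed natural isomorphism $V/V_0\cong\mathrm{CL}(\mathcal W)$. Lemma~\ref{lem:wDalTrivCCs} then makes the right-hand side fully explicit through its parametrization of~$V_0$ via $(F,G)=(\mathscr D_\eta\tilde F+Cw_{01},\mathscr D_\xi\tilde G-Cw_{10})$.

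There is no substantive obstacle here: the content of the theorem is purely the combination of Lemma~\ref{lem:wDal:GenFormOfCCs} (which provides surjectivity of~$\pi$) with the definition of $V_0$ (which identifies its kernel), so the proof should be short. The only conceptual point worth emphasizing is the naturality of the isomorphism, in the sense that $\pi$ is the canonical map induced by the inclusion of~$V$ into the full space of conserved currents composed with the quotient by trivial ones.
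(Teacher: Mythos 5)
Your proposal is correct and matches the paper's intent exactly: the paper states Theorem~\ref{thm:wDal:chars} as an immediate consequence of Lemmas~\ref{lem:wDal:GenFormOfCCs} and~\ref{lem:wDalTrivCCs}, which is precisely your argument (surjectivity of the canonical map $V\to\mathrm{CL}(\mathcal W)$ from Lemma~\ref{lem:wDal:GenFormOfCCs}, kernel equal to~$V_0$ by definition, with Lemma~\ref{lem:wDalTrivCCs} supplying the explicit description of~$V_0$). Your write-up just makes the first-isomorphism-theorem bookkeeping explicit where the paper leaves it implicit.
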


\section{Reformulation in standard space-time coordinates}\label{sec:3}

Assuming the wave equation in the space-time form~$\mathcal U$ solved with respect to $u_{tt}$
and substituting $\p_\xi=\frac12(\p_x+\p_t)$, $\p_\eta=\frac12(\p_x-\p_t)$ and
$u_{2k',l}=u_{0,l+2k'}$, $u_{2k'+1,l}=u_{1,l+2k'}$ in view of~$\mathcal U$,
we obtain that $w_{k0}=\frac12(u_{0k}+u_{1,k-1})$ and $w_{0k}=\frac12(u_{0k}-u_{1,k-1})$
for solutions of~$\mathcal W$ and~$\mathcal U$ related by the change of variables~\eqref{eq:chvar}.
Therefore, translating the results for the equation~$\mathcal W$ from the previous section
to ones for the equation~$\mathcal U$ via pulling back by the transformation~\eqref{eq:chvar},
we derive the following assertion.

\begin{theorem}\label{thm:wave:CLsAndVarSyms}
The space of conservation laws of the equation~\eqref{eq:w:tx}
is naturally isomorphic to the quotient of the space~$\check V$ by the subspace~$\check V_0$,
where $\check V$ denotes the subspace of conserved currents of the form
$(\check F-\check G,\check F+\check G)$ for the equation~\eqref{eq:w:tx},
and $\check V_0$ denotes the subspace of trivial conserved currents belonging to~$\check V$,
\[\check V:=\{(\check F-\check G,\check F+\check G)\}\supset
\check V_0:=\{(\check{\mathscr D}_\eta\check F-\check{\mathscr D}_\xi\check G+Cu_{01},\check{\mathscr D}_\eta\check F+\check{\mathscr D}_\xi\check G-Cu_{10})\}.\]
Here and in what follows
$\check F$ is an arbitrary smooth function of~$\eta:=x-t$
and an arbitrary finite number of $\check w_{0l}:=u_{0l}-u_{1,l-1}$, $l\in\mathbb N$,
and $\check G$ is an arbitrary smooth function of~$\xi:=x+t$
and an arbitrary finite number of $\check w_{k0}:=u_{0k}+u_{1,k-1}$, $k\in\mathbb N$,
\begin{gather*}
\check F=\check F[\check w|\eta]=\check F(\eta,\check w_{01},\dots,\check w_{0r_1}) = \check F(x-t,u_{01}-u_{10}, \ldots,u_{0 r_1}-u_{1,r_1-1}),\\
\check G=\check G[\check w|\xi ]=\check G(\xi ,\check w_{10},\dots,\check w_{r_20}) = \check G(x+t,u_{01}+u_{10}, \ldots,u_{0 r_2}+u_{1,r_2-1})
\end{gather*}
for some $ r_1,r_2\in\mathbb N_0$,
$\check{\mathscr D}_\xi:=\p_\xi+\check w_{k+1,0}\p_{\check w_{k0}}$ and $\check{\mathscr D}_\eta:=\p_\eta+\check w_{0,l+1}\p_{\check w_{0l}}$, 
and $C$ is an arbitrary constant. 
The quotient space of variational symmetries (or, equivalently, of conservation-law characteristics)
of the equation~\eqref{eq:w:tx} is naturally isomorphic to the subspace of its variational symmetries of the form
\[
\mu=-(-\check{\mathscr D}_\eta)^{l-1}\p_{\check w_{0l}}\check F[\check w|\eta]-(-\check{\mathscr D}_\xi)^{k-1}\p_{\check w_{k0}}\check G[\check w|\xi].
\]
\end{theorem}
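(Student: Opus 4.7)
The plan is to obtain Theorem~\ref{thm:wave:CLsAndVarSyms} as a direct pullback of Theorem~\ref{thm:wDal:chars} and Lemma~\ref{lem:wDal:chars} along the linear diffeomorphism~\eqref{eq:chvar}. Since $\xi=x+t$, $\eta=x-t$ is a smooth invertible point transformation that conjugates the PDE $\mathcal W$ to $\mathcal U$, it induces natural isomorphisms on jet spaces, on spaces of conserved currents modulo trivial ones, and on spaces of conservation-law characteristics. So the task reduces to writing down the pullback formulas in components and checking that the subspaces~$V$ and~$V_0$ transport to the claimed $\check V$ and $\check V_0$, and that characteristics transport as indicated.

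First I would record the elementary pullback identities: $\mathrm D_\xi=\frac12(\mathrm D_t+\mathrm D_x)$, $\mathrm D_\eta=\frac12(\mathrm D_x-\mathrm D_t)$, hence
\[
\mathrm D_\xi F+\mathrm D_\eta G=\tfrac12\mathrm D_t(F-G)+\tfrac12\mathrm D_x(F+G),
\]
so a conserved current $(F,G)$ of $\mathcal W$ corresponds (up to the overall scalar $\frac12$, which is irrelevant in the quotient) to the conserved current $(F-G,F+G)$ of $\mathcal U$. The jet-level substitutions $w_{k0}=\frac12(u_{0k}+u_{1,k-1})=\frac12\check w_{k0}$ and $w_{0l}=\frac12(u_{0l}-u_{1,l-1})=\frac12\check w_{0l}$ (already established in the paragraph preceding the theorem) turn each function $F[w|\eta]$, $G[w|\xi]$ into an arbitrary $\check F[\check w|\eta]$, $\check G[\check w|\xi]$. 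This identifies $V$ with $\check V$. A short chain-rule check shows that $\mathscr D_\eta$ and $\mathscr D_\xi$ pass to $\check{\mathscr D}_\eta$ and $\check{\mathscr D}_\xi$ under these identifications.

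Next, to obtain the description of $\check V_0$, I would pull back the form of $V_0$ from Lemma~\ref{lem:wDalTrivCCs}. The current $(\mathscr D_\eta\tilde F+Cw_{01},\mathscr D_\xi\tilde G-Cw_{10})$ becomes, under $(F,G)\mapsto(F-G,F+G)$ and the substitution $w_{01}+w_{10}=u_{01}$, $w_{10}-w_{01}=u_{10}$ (after renaming $\tilde F,\tilde G,C$ absorbing inessential factors of $\frac12$),
\[
\bigl(\check{\mathscr D}_\eta\check F-\check{\mathscr D}_\xi\check G+Cu_{01},\ \check{\mathscr D}_\eta\check F+\check{\mathscr D}_\xi\check G-Cu_{10}\bigr),
\]
which is exactly the stated $\check V_0$. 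For the characteristic formula I would compute $w_{11}=\mathscr D_\xi\mathscr D_\eta w=-\tfrac14(u_{tt}-u_{xx})$ and combine with Lemma~\ref{lem:wDal:chars} to get $\mathrm D_t(F-G)+\mathrm D_x(F+G)=-\tfrac12\lambda(u_{tt}-u_{xx})$ modulo trivialities, whence $\mu=-\lambda/2$. The rescaling $\partial_{w_{0l}}=2\partial_{\check w_{0l}}$, $\partial_{w_{k0}}=2\partial_{\check w_{k0}}$ turns the formula of Lemma~\ref{lem:wDal:chars} into the claimed expression for~$\mu$.

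The main obstacle is bookkeeping of signs and the powers of $\frac12$ introduced by the linear change~\eqref{eq:chvar}; in particular, one must verify that the constant-$C$ piece of $V_0$, which tracks the one-dimensional space of constant cosymmetries beyond the image of $(-\mathscr D_\eta)^{l-1}\partial_{w_{0l}}$ and $(-\mathscr D_\xi)^{k-1}\partial_{w_{k0}}$, transports precisely to the $Cu_{01}$, $-Cu_{10}$ terms in the statement, rather than being absorbed or duplicated. Once this is checked, everything else is a mechanical transcription, and the two isomorphisms asserted in Theorem~\ref{thm:wave:CLsAndVarSyms} follow from Theorem~\ref{thm:wDal:chars} and Lemma~\ref{lem:wDal:chars} applied in light-cone coordinates.
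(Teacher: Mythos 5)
Your proposal is correct and follows essentially the same route as the paper, which likewise obtains Theorem~\ref{thm:wave:CLsAndVarSyms} by pulling back the results of Section~\ref{sec:2} along the point transformation~\eqref{eq:chvar}; your sign and factor bookkeeping (the rescaling $\check w_{kl}=2w_{kl}$, the relation $\mu=-\lambda/2$ coming from $w_{11}=-\tfrac14(u_{tt}-u_{xx})$, and the transport of the constant-$C$ part of $V_0$ to the $Cu_{01}$, $-Cu_{10}$ terms) all check out against the stated formulas.
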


To avoid multiple twos and halves in the above formulas, we multiply~$\check w_{kl}$, 
in comparison with~$w_{kl}$, by 2. 
The additional minuses in~$\mu$ is needed for associating~$\mu$ 
with the conserved current $(\check F-\check G,\check F+\check G)$.

In particular, the solutions $\mu=\mu(t,x)$ of the equation~\eqref{eq:w:tx}, $\mu_{tt}-\mu_{xx}=0$,
exhaust the subspace of $u$-independent (order $-\infty$) variational symmetries of this equation.
Basic examples of physical conservation laws describe
the conservation of momentum in the $u$-direction,
the motion of the center of mass of the wave,
the conservation of angular momentum with respect to $x=0$
and the conservation of energy,
which respectively have the characteristics $\mu=1$, $t$, $x$, $u_t$
and the conserved currents $(u_t,-u_x)$, $(tu_t-u,-tu_x)$, $(xu_t,-xu_x+u)$, $(\frac12u_t^{\,2}+\frac12u_x^{\,2},-u_t u_x)$.
These conservation laws arise for the pairs $(\check F,\check G)=-\frac12(\check w_{01},\check w_{10})$,
$\frac12(\eta\check w_{01},-\xi\check w_{10})$, $-\frac12(\eta\check w_{01},\xi\check w_{10})$,
and $\frac14(\check w_{01}^{\,\,\,2},-\check w_{10}^{\,\,\,2})$, respectively.
Other simple conservation-law characteristics of~\eqref{eq:w:tx} are, e.g., $\mu=tx$, $u_x$, $xu_x+tu_t$ and $xu_t+tu_x$. 
A~more unusual choice, e.g., of the pair $(\check F,\check G)=(e^{\check w_{02}},0)$ yields the local conservation law of~\eqref{eq:w:tx}
with conserved current $(e^{u_{02}-u_{11}},e^{u_{02}-u_{11}})$ 
and with characteristic $\mu=(u_{03}-u_{12})e^{u_{02}-u_{11}}$.
This corresponds to the local conservation law with conserved current
$(e^{2w_{02}},0)$ 
and with characteristic $\lambda=-4w_{03}e^{2w_{02}}$
of the wave equation \eqref{eq:w:dal} in the light-cone variables.

\section{Discussion}\label{sec:Disc}

Let us discuss the possibility of describing the space of local conservation laws of the equation~\eqref{eq:w:dal}
using the criterion for its conservation-law characteristics $\lambda[w]$, 
\begin{equation}\label{eq:CriterionForLambda}
\mathsf E_w(\lambda[w]w_{\xi\eta})\equiv 0.
\end{equation}
\looseness=-1
Since the PDE \eqref{eq:w:dal} is not in a extended Kovalevskaya form, 
here one is generally not allowed to exclude the dependence of~$\lambda$ 
on the only possible leading derivative $w_{\xi\eta}$ and its further derivatives, 
and so it is not clear how to avoid ``singular multipliers''. 
However, Lemma~\ref{lem:wDal:chars} implies 
that in this particular case, 
each equivalence class of conservation-law characteristics of~\eqref{eq:w:dal} 
contains a characteristic of the form~\eqref{eq:w:dal:CL:char}, 
which is independent of the mixed derivatives $w_{kl}$, $k,l\in\mathbb N$, 
which exhaust the principal derivatives for~\eqref{eq:w:dal}.
Even after neglecting the problem of excluding principal derivatives, 
it is not straightforward to find the explicit general form~\eqref{eq:w:dal:CL:char} 
for conservation-law characteristics of~\eqref{eq:w:dal} by solving~\eqref{eq:CriterionForLambda} directly. 
Indeed, even assuming $\lambda[w]=H(\xi,w_{10},\ldots,w_{r0})$ with $r\geqslant1$ 
(which should yield the second part of the formula \eqref{eq:w:dal:CL:char}: $H=(-\mathscr D_\xi)^{k-1}\p_{w_{k0}}G[w|\xi]$), 
the condition~\eqref{eq:CriterionForLambda} splits with respect to derivatives $w_{k1}$, $k\in\mathbb N$, into the system
\[
\p_{w_{i-1,0}}H+\mathscr D_\xi\p_{w_{i0}}H+\sum_{j=i-1}^{r}(-1)^j \binom{j}{i-1}\mathscr D_\xi^{j-i+1}\p_{w_{j0}}H=0,\quad i=1,\ldots, r+1,
\]
which remains quite complex. 
The system of determining equations for conservation-law characteristics of~\eqref{eq:w:tx} 
is more complicated than the above one. 
In summary, from the computational point of view, in order to compute all local conservation laws of~\eqref{eq:w:tx}
it turns out to be optimal to directly work with the conserved currents in the light cone variables, 
to construct the corresponding characteristics using the characteristic form of conservation laws, 
find the subspace of trivial conserved currents as those associated with the zero characteristic of the form~\eqref{eq:w:dal:CL:char} 
and then pull back all the obtained objects by the transformation~\eqref{eq:chvar}.

We considered one more form of the (1+1)-dimensional wave equation, $\mathcal V$: $v_{yz}=v_{yy}$, 
which is related to the canonical form~\eqref{eq:w:dal} in the light-cone variables 
by the point transformation $x=y+z$, $\eta=z$, $w(\xi,\eta)=v(y,z)$.
The equation~$\mathcal V$ has a general Kovalevskaya form when solved for $v_{yy}$. 
It is obvious that the space of its local conservation laws 
is isomorphic to the one for the PDE~\eqref{eq:w:dal} derived in Section~\ref{sec:2}, 
through the above transformation. 
However, if one chooses $v_{yz}$ as the leading derivative in~$\mathcal V$ 
confines differential functions of~$v$ to solutions of~$\mathcal V$ 
by excluding the corresponding principal derivatives $v_{kl}$, $k,l\in\mathbb N$, from the dependence, 
then one can show that a part of conservation-law characteristics is lost 
in the course of looking for the solutions of the corresponding determining equations 
among the confined differential functions. 
More specifically, the subfamily of characteristics corresponding to the ``intact" variable $\xi=y$  and the arbitrary function $G$ survives, 
whereas most of characteristics in the subfamily corresponding to the arbitrary function $F$ in Lemma~\ref{lem:wDal:chars} are lost: 
surviving characteristics from the former subfamily do not contain even-order derivatives 
and can only involve odd derivatives of order three and higher in a linear manner.
We will present details of this consideration in a different paper. 

The example of the equation~$\mathcal V$ illustrates that a conservation-law characteristic can fails to remain a characteristic (and become just a cosymmetry) 
after the substitution of principal derivatives on the solution manifold of the given system of differential equations, 
if the latter is not written in an extended Kovalevskaya form.

We also plan to study similar effects that are related to excluding derivatives 
in view of canonical form potential systems in the course of looking for potential conservation laws 
through their characteristics \cite{Bluman&Cheviakov&Ivanova2006,Kunzinger&Popovych2008}.


\medskip
\noindent{\bf Acknowledgements.}
The research of ROP was supported by the Austrian Science Fund (FWF), projects P25064, P29177 and P30233.
AFC is grateful to NSERC for support through a Discovery grant.

\vspace*{-2ex}

\end{document}